\newtheorem{theorem}{Theorem}[section]
\newtheorem{proposition}[theorem]{Proposition}
\theoremstyle{definition}
\newtheorem{definition}[theorem]{Definition}
\newtheorem{example}[theorem]{Example}
\theoremstyle{remark}
\newtheorem{remark}[theorem]{Remark}
\begin{document}

\begin{frontmatter}
\journal{}

\title{Quadratic Motion Polynomials With Irregular Factorizations}

\author[1]{Daren A. Thimm}
\ead{daren.thimm@uibk.ac.at}
\address[1]{Universität Innsbruck, Department of Basic Sciences in Engineering Sciences, Technikerstr.~13, 6020 Innsbruck, Austria}

\author[2,3]{Zijia Li}
\ead{lizijia@amss.ac.cn}
\address[2]{State Key Laboratory of Mathematical Sciences, Academy of
  Mathematics and Systems Science, Chinese Academy of Sciences, Beijing 100190,
  China;}
\address[3]{School of Mathematical Sciences, University of Chinese Academy of Sciences, Beijing 100049, China}

\author[1]{Hans-Peter Schröcker}
\ead{hans-peter.schroecker@uibk.ac.at}

\author[4]{Johannes Siegele}
\ead{johannes.siegele@ricam.oeaw.ac.at}
\address[4]{Johann Radon Institute for Computational and Applied Mathematics (RICAM), Austrian Academy of Sciences, Altenberger Straße 69, 4040 Linz, Austria}

\begin{abstract}
  Motion polynomials are a specific type of polynomial over a Clifford algebra that can conveniently describe rational motions. There exists an algorithm for the factorization of motion polynomials that works in generic cases. It hinges on the invertibility of a certain coefficient occurring in the algorithm. If this coefficient is not invertible, factorizations may or may not exist. In the case of existence we call this an irregular factorization. We characterize quadratic motion polynomials with irregular factorizations in terms of algebraic equations and present examples whose number of unique factorizations range from one to infinitely many. For two special sub-cases we show the unique existence of such polynomials. In case of commuting factors we obtain the conformal Villarceau motion, in case of rigid body motions the circular translation.
 \end{abstract}

\begin{keyword}
  Conformal geometric algebra \sep
  Conformal kinematics \sep
  Rational motion \sep
  Motion factorization \sep
  Villarceau motion \sep
  Circular translation
  \MSC[2020]{Primary 12D05, 15A66, 70B10; Secondary 16S36, 30C15}
\end{keyword}

\end{frontmatter}

\section{Introduction}
\label{sec:introduction}

The factorization theory of motion polynomials was introduced in
\cite{hegedus13} with the purpose of constructing closed-loop linkages directly
from the motion of one link. Ever since, it saw numerous applications in
mechanism science, cf. \cite{hegedus15,liu23,liu24,liu21} to name but a few. But
the factorization theory is also interesting in its own right. It extends
classical results on the factorization of unilateral quaternionic polynomials
\cite{gordon65,niven41} to dual quaternionic polynomials that parametrize
rational motions.

While generically a motion polynomial of degree \(n\) admits \(n!\)
factorizations with linear factors over both the quaternions \(\mathbb{H}\) and the
dual quaternions \(\mathbb{DH}\), a notable difference between these two algebras is
that motion polynomials over \(\mathbb{DH}\) might also have infinitely many or no
factorization. While the case of zero factorizations has been resolved recently
\cite{li25}, not much is known about motion polynomials with infinite
factorizations. The most basic example is a quadratic motion polynomial that
parametrizes the curvilinear translation along a circle (a \emph{circular
  translation} for short). We will re-visit it in
Section~\ref{sec:rigid-body-motions}. The kinematic explanation for its unusual
factorization properties is the possibility to generate this motion in
infinitely many ways by a parallelogram linkage, as seen in Figure~\ref{fig:parallelogram}.
\begin{figure}
    \centering
    \includegraphics[width=0.8\textwidth]{./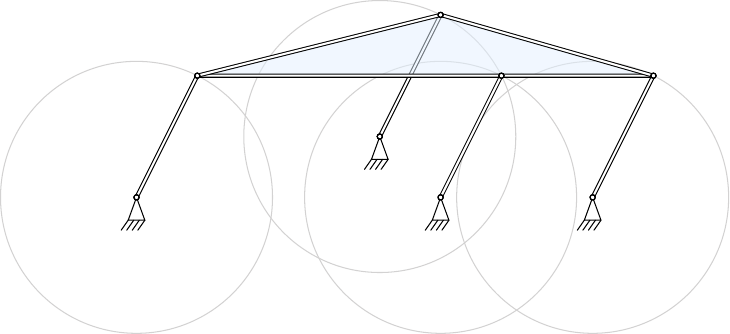}
    \caption{Construction of a circular translation, by parallelogram linkages.}
    \label{fig:parallelogram}
\end{figure}

The concept of motion polynomials\footnote{We finally settled with this name
  after calling them ``rotor polynomials'' in the proposal acknowledged at the
  end of this text and ``spinor polynomials'' in \cite{kalkan22}. Both earlier
  names led to confusion in relevant scientific communities.} and the
factorization algorithm of \cite{hegedus13} readily extends from dual
quaternions \(\mathbb{DH}\) to conformal geometric algebra \(\ensuremath{\mathrm{CGA}}\) or from Euclidean
kinematics to conformal kinematics \cite{li18}. The most notable difference is
that a generic motion polynomial of degree \(n\) generically admits up to
\((2n)!/2^n\) factorizations. Even in this larger algebra, only one further  non-trivial motion polynomial with infinite factorizations is known (cf. Definition~\ref{def:trivial}). Motivated
by some applications to physics, it was introduced by L.~Dorst in
\cite{dorst19}. Its infinitely many factorizations were described in
\cite{icgg_paper} and it is noteworthy that they all commute. Since the motion's trajectories are related to Villarceau circles on a
torus, as can be observed in Figure~\ref{fig:villarceau}, we call it the \emph{conformal Villarceau motion,} cf.
Section~\ref{sec:commuting}.

\begin{figure}
    \centering
    \includegraphics[width=0.8\textwidth]{./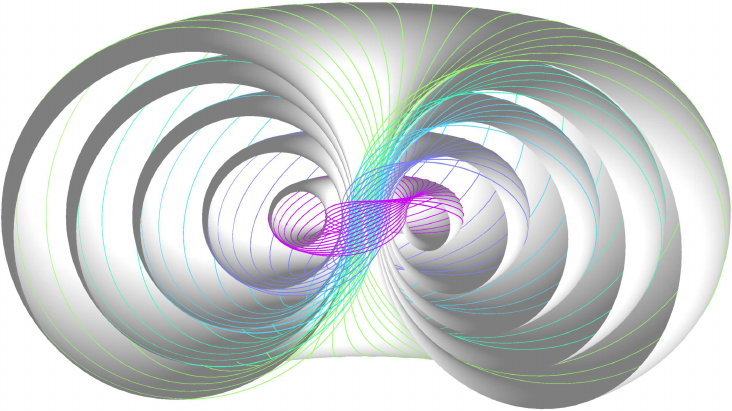}
    \caption{Trajectories of the Villarceau motion.}
    \label{fig:villarceau}
\end{figure}

The existence of infinitely factorizable motion polynomials can be traced back
to the non-invertibility of a specific coefficient which arises during the
factorization algorithm. Conversely, the non-invertibility of this coefficient
does not immediately imply the existence of an infinite amount of
factorizations. It is possible that no factorizations exist but also a finite
number of factorizations is still possible --- a phenomenon that has not yet
been observed in literature. We call factorizations obtained under these
conditions \emph{irregular.} A precise definition will be given in Definition~\ref{def:irregular}.

In this paper we study quadratic motion polynomials over \(\ensuremath{\mathrm{CGA}}\) with irregular
factorizations. In Section~\ref{sec:motion-polynomials} we characterize them as
real solutions to a system of algebraic equations. Each linear factor
parametrizes one of three possible \emph{simple motions} in the sense of
\cite{dorst16} --- a conformal rotation, a transversion, or a conformal scaling.
We show by example that all possible pairings of these motion types can
appear as irregularly factorizable motions.

The primary approach involves examining the conditions under which an algebra
element becomes non-invertible. The characteristics of non-invertible vectors in
Conformal Geometric Algebra (\ensuremath{\mathrm{CGA}}) have been extensively investigated in the
literature \cite{dorst16,dorst2009geometric}. These findings have significant
applications in the field of Automated Theorem Proving, as demonstrated in
several studies \cite{lihongbo21,lihongbo19,lihongbo11}. This paper extends the
investigation to non-invertible elements that go beyond mere null vectors.

In Section~\ref{sec:rigid-body-motions} we use our characterization of irregular
factorizability to show that the circular translation is the only irregularly factorizable rigid body motion.
Finally, in Section~\ref{sec:commuting} we prove that \emph{commuting} irregular factors imply that
the motion is the already known conformal Villarceau motion.
Both uniqueness statements
ignore some trivial exceptions and are only up to conformal equivalence.

\section{Preliminaries}

In this section we give an overview of all of the necessary concepts used in the rest of this article.

We will explain how to model conformal transformations, how to apply them continuously to geometric objects and how to
decompose more complex motions into their simple constituent parts.

\subsection{Conformal Geometric Algebra}

To describe conformal motions, which contain the Euclidean motions as a subset, we will use the framework of conformal geometric algebra also known as \ensuremath{\mathrm{CGA}} \cite{bayro-corrochano19}. It is a Clifford algebra of signature \((4,1)\) over the reals together with an involution \(a \mapsto \tilde{a}\) called \emph{reversion.} We choose the orthonormal basis \(\{e_1,e_2,e_3,e_+,e_-\}\in \mathbb{R}^{4,1}\) such that
\begin{equation*}
    e_1^2=e_2^2=e_3^2=e_+^2=1, \quad e_-^2=-1.
\end{equation*}
For \(i \neq j \in \{1,2,3,+,-\}\) we define \(e_i e_j = -e_j e_i \coloneqq e_{ij}.\)
This can be extended in the same way to the product of multiple basis vectors.
The elements of \ensuremath{\mathrm{CGA}} consist of linear combinations of all possible multiplications of the basis vectors.
We say an element has grade \(n\) if it can be written as a linear combination of products of \(n\) basis vectors.
In this case we call it an \(n\)-vector. If an element does not have a unique grade, but is rather the sum of multiple elements with a single grade, we call it a multivector.

The involution  \(a \mapsto \tilde{a}\) is defined by reversing the order of the indices in each multiplication.
By the anti-commutativity of the basis vectors, this is equivalent to a sign change according to the parity of the number of transpositions needed to invert the list of indices.

For our approach, a projective viewpoint is more natural and we will often consider multivectors as points of the projective space \(\mathbb{P}(\ensuremath{\mathrm{CGA}})\) over~\ensuremath{\mathrm{CGA}}. If \(a \in \ensuremath{\mathrm{CGA}}\) then we denote the corresponding point in \(\mathbb{P}(\ensuremath{\mathrm{CGA}})\) by \([a]\).

A conformal displacement is a successive inversion in a number of spheres. The sphere $s$ with center \((c_x,c_y,c_z)\) and radius \(r\) is embedded in \ensuremath{\mathrm{CGA}} as
\begin{equation*}
  s = c_x e_1+c_y e_2+c_z e_3+\frac{q-1}{2} e_+ +\frac{q+1}{2}e_-,
\end{equation*}
where \(q \coloneqq c_x ^2 + c_y^2 + c_z^2 - r^2\).

Points and planes are viewed as special cases of spheres.
For points we let the radius be zero and planes can be thought of taking the limit of the center and the radius going to inifinity.
From this it follows that the point $p$ at \((p_x,p_y,p_z)\) and the plane $\pi$ with normal vector \((n_x,n_y,n_z)\) and distance \(d\) to the origin are represented as
\begin{equation*}
  p = p_x e_1+p_y e_2+p_z e_3+\frac{p_x ^2 + p_y^2 + p_z^2-1}{2} e_+ +\frac{p_x ^2 + p_y^2 + p_z^2+1}{2}e_-,
\end{equation*}
and
\begin{equation*}
  \pi = n_x e_1+n_y e_2+n_z e_3+(n_x^2+n_y^2+n_z^2)d(e_+ + e_-).
\end{equation*}

The inversion of an element \([a]\) by a sphere \(s\) is given by \([s a \tilde{s}]\), commonly known as the sandwich product. As already mentioned, using equivalence classes is beneficial in the context of motion polynomials. In this viewpoint we loose the weight and orientation of objects but gain ease of use for polynomial computations. Instead of the sphere \(s\) we could equally well use \([s]\). Hence we also do not require the normalization \(s\tilde{s} = \pm 1\).

Later we will study conformal motions, that is, continuous sets of displacements parameterized by rational functions (or polynomials in the projective setting). Since composition with a fixed sphere inversion is irrelevant in this context, we restrict to the even sub-algebra \ensuremath{\mathrm{CGA}_+} of \ensuremath{\mathrm{CGA}}. It corresponds to the composition of an \emph{even} number of sphere inversions.

An element \([a] \in \mathbb{P}(\ensuremath{\mathrm{CGA}_+})\) describes a conformal displacement if and only if \(a\tilde a\), \(\tilde a a \in\mathbb{R} \setminus \{0\}\). Elements \([a] \in \mathbb{P}(\ensuremath{\mathrm{CGA}_+})\) fulfilling the condition \(a\tilde a = \tilde a a \in\mathbb{R}\) lie on an algebraic variety defined by it, called the \emph{Study variety} \(\ensuremath{\mathcal{S}}\) of conformal kinematics \cite{kalkan22}.

Following a suggestion by N.~Wildberger \cite{DivineProportions}, we call \(a\tilde a\) the quadrance of \(a\) if \(a\tilde a \in \mathbb{R}\). Sometimes this is also called the norm of \(a\), but since it is rather a squared norm we chose the former name to avoid confusion. Note that we do not have to distinguish a left and right quadrance \(a\tilde a\), \(\tilde a a\) since both values coincide.

To describe smooth motions we can use the typical approach of taking the rotor exponential.
Let \(B\in\ensuremath{\mathrm{CGA}}\) be a 2-blade. The motion described by this blade and parametrized by a
time \(u\) is then given by sandwiching with the exponential \(e^{uB/2}\).
Using the fact that
 \begin{equation*}
  e^{uB^\prime} =
     \left\lbrace
     \begin{array}{l}
       \cos (u \sqrt{B^\prime \tilde B^\prime}) + \frac{B^\prime}{\sqrt{B^\prime\tilde B^\prime}} \sin (u\sqrt{B^\prime \tilde B^\prime})\\
       1 + B^\prime u\\
       \cosh (u\sqrt{-B^\prime \tilde B^\prime}) + \frac{B^\prime}{\sqrt{-B^\prime\tilde B^\prime}}\sinh (u\sqrt{-B^\prime \tilde B^\prime})
     \end{array}\right.
     \begin{array}{c}
       \text{ if } B^\prime\tilde B^\prime > 0\\
       \text{ if } B^\prime\tilde B^\prime = 0\\
       \text{ if } B^\prime\tilde B^\prime < 0
     \end{array},
\end{equation*} we can rewrite the exponential \(e^{uB/2}\), as a linear polynomial function \(t - a\)
via an appropriate choice of the following reparametrizations.
\[t\coloneqq\cot(u), \quad t\coloneqq u^{-1}, \quad t\coloneqq \pm \coth(u)\]
For \(a\in \ensuremath{\mathrm{CGA}_+}\) we call \(t-a\) a simple motion if \((t-a)(t-a){\,\tilde{}} \in \mathbb{R}\) for any \(t \in \mathbb{R}\).
These can be categorized into three groups according to the number of distinct real roots of the quadrance-polynomial \((t-a)(t-a){\,\tilde{}}\).
For zero roots it describes a rotation, for one distinct root a translation and for two a scaling 
\cite{dorst16,kalkan22}.

The number of roots correspond to the number of distinct points of a
transformation to which all points in space are mapped simultaneously, as seen in Figure \ref{fig:elementary-motions}.
For a rotation there exists no such point. For a translation this point is the point at infinity, or a conformal image thereof. For a scaling the points are the center of scaling and the point at infinity, or their conformal images.

\begin{figure}
\centering
\includegraphics[width=0.30\textwidth]{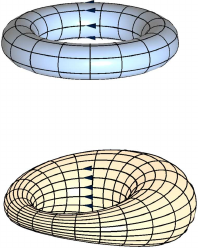}
\includegraphics[width=0.30\textwidth]{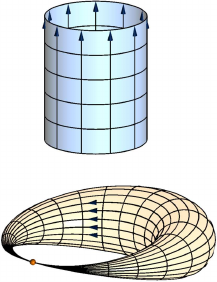}
\includegraphics[width=0.35\textwidth]{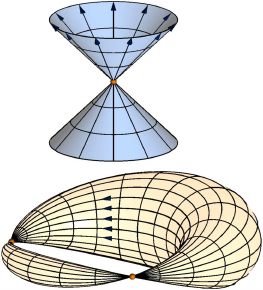}
\caption{Simple conformal motions and euclidean counterpart. From left to right: rotation, transversion, scaling}
\label{fig:elementary-motions}    
\end{figure}

Letting multiple simple motions act upon an element can be done by letting them act individually in sequence. This corresponds to the multiplication of the individual linear polynomials that represent the smooth motion.

\subsection{Polynomials in \ensuremath{\mathrm{CGA}_+}}\label{sub:Polynomials_in_CGA}
We will now take a closer look at polynomials in \ensuremath{\mathrm{CGA}_+}.  Let \(C = \sum_{i=0}^{n} c_i t^i\), where \(c_i \in \ensuremath{\mathrm{CGA}_+}\) be a polynomial in the indeterminate \(t\). We define the multiplication with the convention that the indeterminate commutes with the coefficients of the polynomials. This is reasonable given the fact the we regard \(t\) as a real motion parameter (time). Nonetheless, we will also evaluate polynomials at more general algebra elements, but then we need to distinguish two types of evaluation, the right and left evaluation. They are defined by
\[
  C(h)_r \coloneqq \sum_{i=0}^{n}c_ih^i
  \quad\text{and}\quad
  C(h)_l \coloneqq \sum_{i=0}^{n}h^ic_i,
\]
respectively. From now on we will only be using the right evaluation and the emerging theory and use the shorter notation \(C(h) \coloneqq C(h)_r\). Everything can be also formulated equivalently for the ``left'' theory.

We define the reversion \(\tilde{C}\) of \(C\) by taking the reversion of all coefficients. Furthermore, we define the action of \(C\) on an element \(a \in \ensuremath{\mathrm{CGA}}\) by the sandwich product \(C a \tilde{C}\). For \(C\) to describe a conformal motion it is necessary for its left and right quadrance to be equal and a non-zero real polynomial: \(C\tilde{C} = \tilde{C}C \in \mathbb{R}[t] \setminus \{0\}\). In the case that these conditions are met we call \(C\) a \emph{motion polynomial.} Indeed, it describes a conformal motion as \(C(t)\) is a conformal displacement for any \(t \in \mathbb{R}\) with at most finitely many exceptions, namely the real roots of the quadrance polynomial \(C\tilde{C}\). Since the trajectories of all points are rational curves we speak of a \emph{rational motion.}

\subsection{Decomposition into Simple Motions}\label{sub:Decomposition_into_Simple_Motions}

The question now arises how to decompose a rational motion into simple motions, that is, how to factor a motion polynomial \(C\) into linear factors. Over a non-commutative ring this is a non-trivial matter. Some things are already known about the factorizability of such polynomials \cite{li18}.
\begin{itemize}
  \item For generic polynomials \(C\) of degree \(n\) the number of factorizations depends upon the number of real roots of \(C\tilde C\)
    and ranges from \(n!\) for no real roots to \(\frac{(2n)!}{2^n}\) for \(2n\) roots.
  \item There exist polynomials with no factorization.
  \item There exist polynomials with infinitely many factorizations.
  \item \(t-h\) is a right factor of \(C\) if and only if \(h\) is a right root of \(C\)
\end{itemize}

We will now describe a method to compute all factorizations with linear factors
of a motion polynomial, provided they exist. Our exposition follows \cite{li18}
but it should be mentioned that more or less similar factorization algorithms
have been described at many different places and in different context
\cite{gordon65,serodio01,janovska10,kalantari13,falcao17,sakkalis19,gentili21,izosimov23,koprowski23,li24,hoffmann24}. It should also be noted that, due to the restricted nature of
polynomials we consider, our topic is a special case of the more general and
well-developed factorization theory over rings, cf. the survey
paper~\cite{smertnig16}.

For this we will henceforth assume \(C\) to have an invertible leading
coefficient. Because of \(C\tilde{C} \neq 0\) this is no loss of generality as it
can be ensured by a suitable rational re-parametrization. But then we can also
assume that \(C\) is monic, i.e. has leading coefficient \(1\),
as the leading coefficient can be factored out.

Since \(t-h\) being a right factor is equivalent to \(h\) being a right root of
\(C\), the question of finding right factors reduces to the question of finding
right roots. Since a right root of the polynomial is also necessarily a right
root of the quadrance of the polynomial, we can search for roots of \(C\tilde{C}\).
Each right factor \(t-h\) and therefore also each right root corresponds to a
monic, quadratic factor \(M\coloneqq(t-h)(t-h){\,\tilde{}}\) of the quadrance
polynomial. Using the Euclidean algorithm \cite{li18}, we can now
divide \(C\) by \(M\) and get \[C = QM + R\] for suitable polynomials
\(Q\), \(R\) with \(\deg R < \deg M = 2\).
This is possible because the leading coefficient \(1\) of the divisor \(t-h\) is invertible.
Since we know that \(h\) is a root of \(C\) and \(M\) it follows that it
also has to be a root of \(R\) \cite[Lemma~1]{hegedus13}. Furthermore, we can
write \(R = r_1t + r_0\), where \(r_0,r_1\in \ensuremath{\mathrm{CGA}_+}\). The question of
factorization is closely related to the
roots of the linear polynomial \(R\). Assuming for the moment that \(r_1\) is
invertible we get the unique solution \(h=-r_1^{-1}r_0\) for our particular
choice of \(M\). Once we have found this right factor we can divide \(C\) by it
and get \(C^\prime\) of a lower degree and can iterate. Note that
there is also a ``left'' variant of this factorization algorithm.

If the leading coefficient \(r_1\) is not invertible, we potentially get no or
infinite roots of \(R\). To solve for \(h\) we convert \(R(h)=0\) into a system
of linear equations via the coefficient-vectors with regards to the basis of
\ensuremath{\mathrm{CGA}}. The resulting system of linear equations has no or an infinite amount of
solutions which may lead to no, or infinitely many but also to finitely many
factors of \(C\) that can be determined by imposing further necessary condition
(cf.~Section~\ref{sec:irregular-factorizations}). In order to capture this
special situation, we define

\begin{definition}
  \label{def:irregular}
  We call a factorization \(C = (t-h_1)(t-h_2) \cdots (t-h_n)\) of a motion
  polynomial \(C\) \emph{irregular,} if there exists an index \(\ell \in
  \{1,2,\ldots,n\}\) such that the linear remainder polynomial \(R\) obtained by
  dividing either
  \begin{itemize}
  \item \((t-h_1)(t-h_2) \cdots (t-h_\ell)\) or
  \item \((t-h_\ell)(t-h_{\ell+1}) \cdots (t-h_n)\)
  \end{itemize}
  by \(M \coloneqq (t-h_\ell)(t-\tilde{h}_\ell)\) has a non-invertible leading
  coefficient. In this case we also say that \(C\) is \emph{irregularly
    factorizable.}
\end{definition}

\begin{remark}
  Definition~\ref{def:irregular} covers both the left and the right
  version of the factorization algorithm. It is, however, not clear whether an
  irregular ``right'' factorization implies an irregular ``left'' factorization
  or not. For quadratic motion polynomials however, 
  as the remainder polynomials have the same leading coefficient, both notions coincide.
\end{remark}

\section{Motion Polynomials with Irregular Factorizations}
\label{sec:motion-polynomials}

Now that we have a framework with which to describe rational motions in \ensuremath{\mathrm{CGA}} and
we know how to factorize them, we want to investigate the special class of
irregularly factorizable motions. As stated in
Section~\ref{sub:Decomposition_into_Simple_Motions}, in general a motion polynomial only has a
finite amount of factorizations and they can be computed by a straightforward
algorithm which, however, may fail in some instances. It is precisely those
irregular cases that we are interested in.

As of today, only few examples of irregularly factorizable motion polynomials
are known, the most famous of which are the circular translation \cite{li19} that is, a motion without rotational component whose trajectories are circles 
and the Villarceau motion \cite{dorst19} whose trajectories are related to Villarceau circles on a
torus. Our aim is to find a complete description of motion polynomials of
degree two with irregular factorizations and provide new examples. For this we
need to answer the question how irregular factorizability can be stated
algebraically. We will only be regarding monic polynomials as, by assumption,
the leading coefficient is invertible and can therefore be factored out.

\subsection{Conditions for Irregular Factorizability}
\label{sec:conditions_for_irregular_factorizability}

Let us regard a polynomial \(C \coloneqq t^2 + a t + b \in\ensuremath{\mathrm{CGA}_+}[t]\) and assume
that \(C\) has a factorization as \(C = (t-h_1)(t-h_2)=t^2-(h_1+h_2)t +h_1h_2\)
for some \(h_1,h_2\in\ensuremath{\mathrm{CGA}_+}\). When does \(C\) now have irregular factorizations?

\begin{theorem}
  \label{th:irregular}
  The factorization \(C = (t-h_1)(t-h_2)\) of the monic quadratic motion
  polynomial \(C \in \ensuremath{\mathrm{CGA}_+}[t]\) is irregular if and only if \(h_1-\tilde{h}_2\) is
  not invertible.
\end{theorem}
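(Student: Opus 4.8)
The plan is to render Definition~\ref{def:irregular} completely explicit in the case $n=2$ and then reduce the claim to an elementary polynomial division. For $n=2$ and $\ell\in\{1,2\}$ the definition asks us to divide one of $(t-h_1)\cdots(t-h_\ell)$ or $(t-h_\ell)\cdots(t-h_2)$ by $M\coloneqq(t-h_\ell)(t-\rv{h}_\ell)$ and to examine the leading coefficient of the linear remainder. Two of the four resulting cases are degenerate: the dividend $t-h_\ell$ then has degree below that of the divisor, the remainder is $t-h_\ell$ itself, and its leading coefficient is the unit $1$, so such a case never forces irregularity. Hence $C=(t-h_1)(t-h_2)$ is irregular if and only if the remainder of $C$ upon division by $M_1\coloneqq(t-h_1)(t-\rv{h}_1)$, or the remainder of $C$ upon division by $M_2\coloneqq(t-h_2)(t-\rv{h}_2)$, has a non-invertible leading coefficient.

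I would then carry out these two divisions. Since $C$, $M_1$ and $M_2$ are all monic of degree~$2$ and the indeterminate is central, the quotient equals $1$ in each case and the remainder is simply the difference. Using $C=t^2-(h_1+h_2)t+h_1h_2$ and $M_2=t^2-(h_2+\rv{h}_2)t+h_2\rv{h}_2$ one gets
\[
  C-M_2=(\rv{h}_2-h_1)\,t+(h_1h_2-h_2\rv{h}_2),
\]
with leading coefficient $\rv{h}_2-h_1$; and, factoring $t-h_1$ out on the left,
\[
  C-M_1=(t-h_1)(\rv{h}_1-h_2),
\]
with leading coefficient $\rv{h}_1-h_2$.

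It then remains to connect invertibility of $\rv{h}_2-h_1$ and of $\rv{h}_1-h_2$ with that of $h_1-\rv{h}_2$. Reversion is an $\R$-linear anti-automorphism of $\CGAp$, in particular a bijection respecting products, so an element is invertible precisely when its reverse is; since $\rv{h}_2-h_1=-(h_1-\rv{h}_2)$ and $\wrv{\rv{h}_1-h_2}=h_1-\rv{h}_2$, both leading coefficients are invertible exactly when $h_1-\rv{h}_2$ is. Combined with the first paragraph this yields the equivalence. I do not foresee a real obstacle; the only delicate points are the bookkeeping of the cases in Definition~\ref{def:irregular} --- one may remark that its ``right'' instance ($C$ divided by $M_2$) and its ``left'' instance ($C$ divided by $M_1$) produce the quantities $\rv{h}_2-h_1$ and $\rv{h}_1-h_2$, which differ by reversion and a sign and hence are invertible simultaneously, explaining why the two notions coincide for quadratic motion polynomials --- and, should one want $M_1$ and $M_2$ to be the genuine real quadratic factors occurring in the factorization algorithm rather than merely formal monic quadratics, a short argument that each linear factor of a motion polynomial is a simple motion, so that $M_1,M_2\in\R[t]$.
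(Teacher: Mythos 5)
Your proposal is correct and follows essentially the same route as the paper: divide $C$ by $M_2=(t-h_2)(t-\rv{h}_2)$ (and by $M_1$), read off the leading coefficient $\rv{h}_2-h_1$ (resp.\ $\rv{h}_1-h_2$) of the linear remainder, and note that its invertibility is equivalent to that of $h_1-\rv{h}_2$. You are in fact somewhat more careful than the paper, which only records the $M_2$ division and asserts that $M_1$ ``results in the same criterion''; your explicit treatment of the degenerate cases of Definition~\ref{def:irregular} and the reversion argument identifying the two leading coefficients fills in exactly that gap.
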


\begin{proof}
  To prove the theorem we try to factorize the polynomial. Since its
  norm-polynomial is given by \(C\tilde C =
  (t-h_1)(t-h_1){\,\tilde{}}\;(t-h_2)(t-h_2){\,\tilde{}}\) we can take
  \(M\coloneqq(t-h_2)(t-h_2){\,\tilde{}}\) as a monic quadratic factor. If we now
  divide \(C\) by \(M\) we see
  \begin{align*}
    C = M - (h_1-\tilde h_2)t + (h_1-\tilde h_2)h_2.
  \end{align*}
  From this we can follow that the factorization is irregular if and only if
  \(h_1-\tilde h_2\) is not invertible, as explained in
  Subsection~\ref{sub:Polynomials_in_CGA}.

  Taking \(M\coloneqq(t-h_1)(t-h_1){\,\tilde{}}\) will result in the same criterion.
\end{proof}

\begin{remark}
  Note that our formulation of Theorem~\ref{th:irregular} assumes existence of a
  factorization. The polynomial \(C\) allows for at least one and possibly
  infinitely many factorization.
\end{remark}

\subsection{Non-Invertibility Condition}

To complement Theorem~\ref{th:irregular} we will now derive an algebraic
formulation of when an algebra element is non-invertible. While this is known in
the Geometric Algebra community, we strive for a simplified algebraic criterion to make ensuing
calculations more manageable.

\begin{proposition}
  \label{prop:invertible}
  Let \(a\in\ensuremath{\mathrm{CGA}}\). Then \(a\) is invertible if and only if \(a\tilde{a}\) is
  invertible.
\end{proposition}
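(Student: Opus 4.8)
The plan is to prove the two implications of Proposition~\ref{prop:invertible} separately, using only that \CGA is a finite-dimensional associative \(\R\)-algebra with unit and that reversion is an involutory anti-automorphism fixing \(1\); no explicit computation inside \CGA is needed. (One could alternatively transport everything to \(\CGA\cong\Mat(4,\C)\) and argue with determinants, but that seems like unnecessary machinery here.)

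For the ``only if'' direction, suppose \(a\) is invertible. Since reversion satisfies \(\rv{xy}=\rv{y}\,\rv{x}\) and \(\rv{1}=1\), we get \(\rv{a}\,\rv{(a^{-1})}=\rv{a^{-1}a}=\rv{1}=1\) and symmetrically \(\rv{(a^{-1})}\,\rv{a}=1\), so \(\rv{a}\) is invertible with inverse \(\rv{(a^{-1})}\). Hence \(a\rv{a}\) is a product of two units and therefore invertible, with inverse \(\rv{(a^{-1})}\,a^{-1}\).

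For the ``if'' direction, suppose \(a\rv{a}\) is invertible. Then \(a\cdot\bigl(\rv{a}\,(a\rv{a})^{-1}\bigr)=1\), so \(a\) admits a right inverse \(r\). The key step is the standard fact that in a finite-dimensional algebra over a field a one-sided inverse is automatically two-sided: the \(\R\)-linear map \(x\mapsto xa\) on \CGA is injective (if \(xa=0\) then \(x=x(ar)=(xa)r=0\)), hence bijective by finite dimensionality, which produces an element \(\ell\) with \(\ell a=1\); then \(\ell=\ell(ar)=(\ell a)r=r\), so \(ar=ra=1\) and \(a\) is invertible.

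The argument has essentially no obstacle. The only point I would state carefully is the finite-dimensionality lemma invoked in the second part, since ``one-sided invertible implies invertible'' is false in general rings; I would include its short proof as above. Everything else is formal bookkeeping with the anti-automorphism property of reversion.
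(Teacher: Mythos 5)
Your proof is correct and takes essentially the same route as the paper: the same explicit inverse formulas, namely \((\rv{a})^{-1}a^{-1}\) for \(a\rv{a}\) in the ``only if'' direction and \(\rv{a}\,(a\rv{a})^{-1}\) for \(a\) in the ``if'' direction. You are in fact more careful than the paper, which simply asserts that \(\rv{a}\,(a\rv{a})^{-1}\) is \emph{the} inverse of \(a\) even though a priori it is only a right inverse; your finite-dimensionality argument (injectivity of \(x\mapsto xa\), hence surjectivity, hence a two-sided inverse) is exactly the right way to close that small gap.
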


\begin{proof}
  If \(a\) is invertible, then \(\tilde{a}\) is invertible, \((\tilde{a})^{-1} =
  (a^{-1}){\,\tilde{}}\) and the inverse of \(a\tilde{a}\) is \((\tilde{a})^{-1}a^{-1}\).
  Conversely, if \(a\tilde{a}\) is invertible, then the inverse of \(a\) is given
  by \(\tilde{a} (a\tilde{a})^{-1}\).
\end{proof}

Proposition~\ref{prop:invertible} allows us to reduce the dimensions in which we
have to look for an inverse from 32 to twelve: Since \(a\tilde{a}\) is its own
reverse it consists only of grade \(0\), \(1\), \(4\) and \(5\) multivectors,
which are all blades and are not changed by reversion.

For this reduced set of \ensuremath{\mathrm{CGA}} we now explicitly calculate the determinant. On
this restricted set the determinant has a rather handy form. To calculate it, we
use the embedding of \ensuremath{\mathrm{CGA}} into \(\operatorname{Mat}_4(\mathbb{C})\), given by
the following mapping of the generators of the algebra \cite{MatrixRep}. All other
basis elements can be constructed via multiplication:
\begin{gather*}
  e_1 \mapsto \begin{pmatrix} 0 & -\mathrm{i} & 0 & 0\\ \mathrm{i} & 0 & 0 & 0\\ 0 & 0 & 0 & -\mathrm{i}\\ 0 & 0 & \mathrm{i} & 0\end{pmatrix},\quad
  e_2 \mapsto \begin{pmatrix} -1 & 0 & 0 & 0\\ 0 & 1 & 0 & 0\\ 0 & 0 & -1 & 0\\ 0 & 0 & 0 & 1\end{pmatrix},\quad
  e_3 \mapsto \begin{pmatrix} 0 & 0 & 0 & 1\\ 0 & 0 & 1 & 0\\ 0 & 1 & 0 & 0\\ 1 & 0 & 0 & 0\end{pmatrix},\\
  e_+ \mapsto \begin{pmatrix} 0 & 1 & 0 & 0\\ 1 & 0 & 0 & 0\\ 0 & 0 & 0 & -1\\ 0 & 0 & -1 & 0 \end{pmatrix},\quad
  e_- \mapsto \begin{pmatrix} 0 & 0 & 0 & 1\\ 0 & 0 & 1 & 0\\ 0 &-1 & 0 & 0\\ -1& 0 & 0 & 0 \end{pmatrix}.
\end{gather*}

Using this embedding, we compute the determinant of 
\begin{multline*}
  x = x_{0} + x_{1} e_{1} + x_{2} e_{2} + x_{3} e_{3} + x_{+} e_{+} + x_{-} e_{-}\\
  +  x_{123+} e_{123+} + x_{123-} e_{123-} + x_{12+-} e_{12+-} + x_{13+-}
  e_{13+-} + x_{23+-} e_{23+-}\\
  + x_{123+-} e_{123+-}\end{multline*}
consisting only of grades \(0\), \(1\), \(4\) and \(5\) as
\begin{equation}
  \label{eq:1}
  \det x = (q-2m)(q+2m) - 4\mathrm{i} qm
  = (q - 2\mathrm{i} m)^2
\end{equation}
where
\begin{align*}
  q &=
   x_{0}^2 -x_{1}^2 -x_{2}^2 -x_{3}^2 -x_{+}^2 +x_{-}^2 \\
  & \quad -x_{123+}^2 +x_{123-}^2 +x_{12+-}^2 +x_{13+-}^2 +x_{23+-}^2
  -x_{123+-}^2\\
  m &= x_{0} x_{123+-} - x_{1} x_{23+-} + x_{2} x_{13+-} - x_{3} x_{12+-} + x_{+} x_{123-} - x_{-} x_{123+}.
\end{align*}
It is zero if and only if \(m=q=0\) i.e. if both factors of the complex part are
zero. Equation~\eqref{eq:1} will be our preferred way to encode
non-invertibility in computations. More precisely, by
Proposition~\ref{prop:invertible}, \(a \in \ensuremath{\mathrm{CGA}}\) is not invertible if and only
if \(x \coloneqq a\tilde{a}\) satisfies \(\det(x)=0\).

\subsection{Irregular Factorizations}
\label{sec:irregular-factorizations}

In order to actually compute examples of irregularly factorizable quadratic
motion polynomials we can now proceed as follows:
\begin{enumerate}
\item We prescribe a linear right motion polynomial factor \(t - h_2\) and define
  \(M \coloneqq (t-h_2)(t-h_2){\,\tilde{}}\).
\item We compute \(h_1\) subject to the conditions that
  \begin{equation}
    \label{eq:2}
    \det(h_1-\tilde{h}_2) = 0
  \end{equation}
  and \(t-h_1\) is a motion polynomial. According to \cite{kalkan22}, the latter
  is the case if and only if
  \begin{equation}
    \label{eq:3}
    h_1\tilde{h}_1 \in \mathbb{R},\quad h_1 + \tilde{h}_1 \in \mathbb{R}.
  \end{equation}
\item The polynomial \(C \coloneqq (t - h_1)(t - h_2)\) is then irregularly
  factorizable.
\end{enumerate}

\begin{remark}
  \label{rem:two-things}
  A few things should be mentioned here:
  \begin{enumerate}
  \item While it is guaranteed that \(C\) is irregularly factorizable it is not
    clear whether it has a finite or an infinite amount of factorizations.
  \item By Proposition~\ref{prop:invertible} we can encode the vanishing of the
    determinant \eqref{eq:2} as vanishing of the determinant of the
    \emph{quadrance} of $h_1-\tilde{h}_2$. By Equation~\eqref{eq:1} this imposes
    two \emph{real} constraints on the unknown coefficients of~$h_1$.
  \item There is some evidence (cf. Remark~\ref{rem:rare}) that real solutions
    to the system of algebraic equations \eqref{eq:2}, and
    \eqref{eq:3} are rare. The real dimension of the solution variety seems to
    be smaller than its complex dimension.
  \end{enumerate}
\end{remark}

There are three conformally non-equivalent types of motions described by
\(t-h_2\) \cite{dorst16,kalkan22}, conformal rotation, transversion, and
conformal scaling. They are distinguished by the number of real roots of the
quadrance polynomial $(t-h_2)(t-\tilde{h}_2)$. In general, we expect there to exist
a quadratic motion polynomial $C = (t-h_1)(t-h_2)$ with irregular factorization
for each pairing of these three motion types. When demonstrating this by
example, we do not have to take care of the order of factors as taking the
reversion of the polynomial preserves the type of motions involved but switches
the motion type of the first and second factor. 

We will now investigate these different pairings. Searching for specific types of
motions of the first factor can be done by additionally prescribing that the
norm polynomial of this factor has to have zero, one or two real roots. Using
this we can now solve for the irregular factorizability condition with this added
restriction and find examples for each type of motion pairing,
as can be seen in Figure~\ref{fig:examples}.

\begin{remark}
  \label{rem:rare}
  All solutions of the irregular factorizability condition
  lie on the real variety generated by the determinant of
  the quadrance of an element. Interestingly, all real solutions seem so be singular
  points of this variety. This has been verified using the Mathematica
  \textsc{Resolve} command and checking if there exists a real non-singular
  point on the variety.
\end{remark}
\begin{example}
There exists at least one example for each case:
 \begin{description}
   \item[Rotation with Rotation]
   Seen in Figure~\ref{fig:r_r}.
   
   $h_1 = -e_{12} + e_{13} + e_{1-} + e_{23} + e_{2-} $,
   
   $h_2 = e_{12}$
   
   This motion has an infinite amount of factorizations.
   \item[Transversion with Rotation]
   Seen in Figure~\ref{fig:t_r}.
   
   $h_1 = -\frac{1}{2}e_{12} + \frac{4}{5}e_{13} + \frac{49}{30}e_{1-} + \frac{4}{3}e_{1+}$,
   
   $h_2 = e_{12}$
   
   This motion has one unique factorization.
   \item[Scaling with Rotation]
   Seen in Figure~\ref{fig:s_r}.
   
   $h_1 = e_{13} + \frac{\sqrt{6}}{3}(2 e_{1-} +1)$,
   
   $h_2 = e_{12}$ 
   
   This motion has one unique factorization.
   \item[Transversion with Transversion]
   Seen in Figure~\ref{fig:t_t}.
   
   $h_1 = e_{3-}-e_{+-}-e_{13}+e_{1+}+e_{1-}-e_{23}+e_{2+}+e_{2-}+\sqrt{2} $,
   
   $h_2 = e_{3+}+e_{3-}$
   
   This motion has two distinct factorizations.
   \item[Scaling with Scaling]
   Seen in Figure~\ref{fig:s_s}.
   
   $h_1 = -e_{3-}+e_{2-}+\sqrt{3}$,
   
   $h_2 = -e_{+-} $ 
   
   This motion has five distinct factorizations.
   \item[Transversion with Scaling]
   Seen in Figure~\ref{fig:t_s}.
   
   $h_1 = e_{2-} + \frac{1}{2}(\sqrt{5}e_{2+} + e_{+-})  $,
   
   $h_2 = -e_{+-}$ 
   
   This motion has three distinct factorizations.
 \end{description}
\end{example}

\begin{figure}
  \begin{subfigure}{0.4\textwidth}
    \includegraphics[width=\textwidth]{./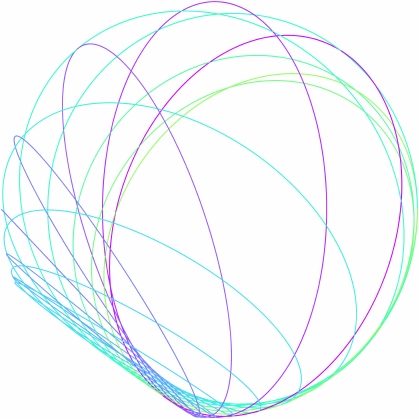}
    \caption{Rotation with Rotation}
    \label{fig:r_r}
  \end{subfigure}
  \hfill
  \begin{subfigure}{0.4\textwidth}
    \includegraphics[width=\textwidth]{./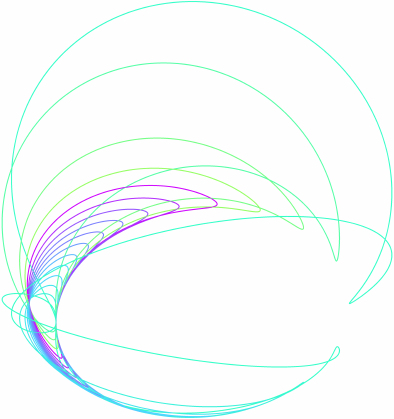}
    \caption{Transversion with Rotation}
    \label{fig:t_r}
  \end{subfigure}
  \\
  \begin{subfigure}{0.4\textwidth}
    \includegraphics[width=\textwidth]{./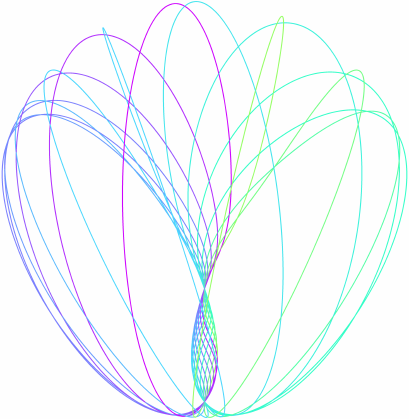}
    \caption{Scaling with Rotation}
    \label{fig:s_r}
  \end{subfigure}
  \hfill
  \begin{subfigure}{0.4\textwidth}
    \includegraphics[width=\textwidth]{./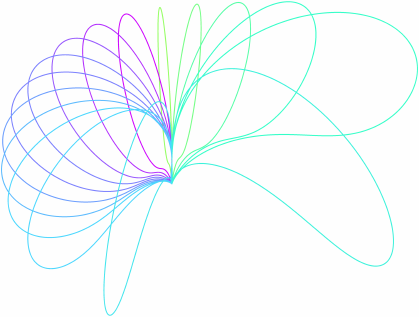}
    \caption{Transversion with Transversion}
    \label{fig:t_t}
  \end{subfigure}
\\
  \begin{subfigure}{0.4\textwidth}
    \includegraphics[width=\textwidth]{./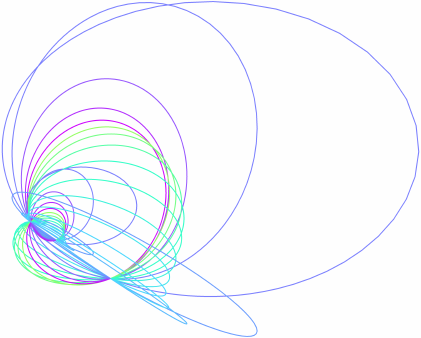}
    \caption{Scaling with Scaling}
    \label{fig:s_s}
  \end{subfigure}
  \hfill
  \begin{subfigure}{0.4\textwidth}
    \includegraphics[width=\textwidth]{./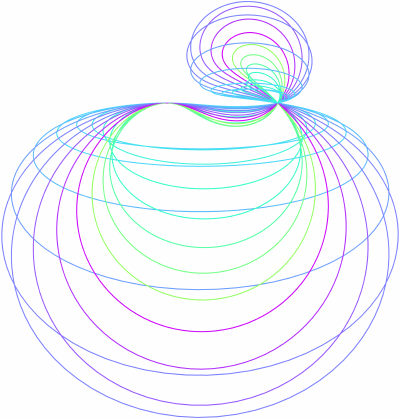}
    \caption{Transversion with Scaling}
    \label{fig:t_s}
  \end{subfigure}
  \caption{Trajectories of points of a circle under irregularly factorizable motion polynomials.}
  \label{fig:examples}
\end{figure}

\begin{remark}
  The graphics in Figure \ref{fig:examples} were plotted using \textsc{Asymptote}.
  An interactive version of the figures can be reproduced using the python package \textsc{cga-py} \cite{Thimm_2025} and plotting the trajectories for a unit circle in the \(x\)-\(y\)-Plane centered at \((0,0,3)\).
\end{remark}

\section{Rigid Body Motions}
\label{sec:rigid-body-motions}

A natural question arising now is, do there exist irregularly factorizable
motions in the space \(\operatorname{SE}(3)\) of rigid body motions? The only known
non-trivial example of degree two is the circular translation given by \(C
\coloneqq (t-(x e_{1}+y e_{2})(e_-+e_+))(t-e_{12})\) \cite{li15}. The two
factors describe rotations around coupled parallel axes of equal speed but in opposite
direction so that the resulting motion is translational, as seen in Figure \ref{fig:parallelogram}. In the following we
will show that this is indeed the only one.

Let us first define the notion of trivial factorizations.
For this we want to call a motion trivial if it is a possibly non-linear reparametrization of a simple motion.

\begin{definition}
  \label{def:trivial}
  The motion \(C=(t-h_1)(t-h_2)\) is called \emph{trivial} if there is a
  map \(f\colon \mathbb{R} \to \mathbb{R}\) and \(h \in \ensuremath{\mathrm{CGA}_+}\) such that \(h\tilde{h}\in \mathbb{R}\) and \(C(t) = f(t)-h\).
\end{definition}

A typical example of a trivial motion is \(C = (t-h)(at + b - h)\) with
\(C\tilde{C} \in \mathbb{R}[t]\setminus\{0\}\) and some \(a, b \in \mathbb{R}\).

We proceed by distinguishing two cases, namely \(h_2\) being a rotation, or a
translation. For each of these two cases we can then calculate the space of
possible \(h_1\) for the conditions \eqref{eq:2}, \eqref{eq:3} from
Subsection~\ref{sec:irregular-factorizations}. Doing this reveals that the
solution space is very small and it only contains the circular translation and
trivial motions. These are applying the same rotation twice,
which just corresponds to a non-linear reparametrization of the basic rotation
and two translations which combined give a translation into a different
direction.

\begin{theorem}[Circular Translation]
  \label{thm:CircTrans}
  Up to Euclidean equivalence the circular translation \(C \coloneqq (t-((x
  e_{1}+y e_{2})(e_-+e_+)-e_{12}))(t-e_{12})\) is the only non-trivial quadratic
  Euclidean motion that has irregular factorizations.
\end{theorem}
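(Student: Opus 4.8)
The plan is to exploit the characterization from Theorem~\ref{th:irregular} and the setup of Section~\ref{sec:irregular-factorizations}, specializing everything to rigid body motions. A rigid body motion polynomial factor $t-h$ lives in the even subalgebra of the dual quaternions sitting inside $\CGAp$; concretely, writing $h$ in the basis, being a rigid motion forces $h$ to be supported only on those blades corresponding to $\H + \eps\H$ (rotation part plus translation part annihilated by $e_{+-}$-type terms), and the classification of $t-h_2$ into rotation versus translation is by the number of real roots of $(t-h_2)(t-\rv h_2)$. By Euclidean equivalence I may normalize $h_2$: in the rotation case take $h_2 = e_{12}$ (a rotation about the $z$-axis through the origin, of unit "speed"), and in the translation case take $h_2$ to be a pure translation, e.g. $h_2 = e_{3+}+e_{3-}$ up to scaling, or even the zero translation after absorbing a reparametrization. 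So the proof splits into exactly these two normalized cases.

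In each case I would parametrize $h_1 \in \CGAp$ by its (at most twelve, after the grade reduction of Proposition~\ref{prop:invertible}, but really far fewer once we impose membership in the dual-quaternion part for a rigid motion) real unknowns, and then write down the full system: equation~\eqref{eq:3} saying $t-h_1$ is again a rigid-body motion polynomial ($h_1\rv h_1\in\R$, $h_1+\rv h_1 \in\R$, plus the rigid-body restriction on the support of $h_1$), together with equation~\eqref{eq:2}, $\det(h_1 - \rv h_2) = 0$, which by Equation~\eqref{eq:1} becomes the two real polynomial conditions $q = 0$ and $m = 0$ evaluated at $x = (h_1-\rv h_2)\wrv{(h_1-\rv h_2)}$. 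This is a finite polynomial system in a handful of real variables. I would then solve it — by hand with a Gröbner-basis or elimination computation, exploiting the normalization of $h_2$ to kill most variables — and read off the components of the real solution variety. The expectation, as flagged in Remark~\ref{rem:rare}, is that the real solution set is much smaller than one might fear: in the rotation--rotation case the only solutions are (i) $h_1$ a scalar shift/reparametrization of $h_2$, i.e. the trivial case $C=(t-h)(at+b-h)$, and (ii) the one-parameter family $h_1 = (x e_1 + y e_2)(e_-+e_+)$, which is precisely the circular translation; in the translation--translation case one gets only trivial motions (two translations compose to a translation, a reparametrized simple motion). One must also check the rotation--translation (and by reversion translation--rotation) mixed case and show it yields nothing non-trivial in $\SE$.

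The main obstacle is the case analysis and bookkeeping in the rotation--rotation branch: after imposing $h_2 = e_{12}$ one still has several free coefficients of $h_1$, and the conditions \eqref{eq:2}–\eqref{eq:3} must be manipulated so that one can prove — not just observe numerically — that the real solution variety has exactly the two claimed components and no others. Here the fact that $h_1 - \rv h_2$ being non-invertible is equivalent (Proposition~\ref{prop:invertible}) to $\det$ of its quadrance vanishing, combined with the clean factored form $\det x = (q-2\ci m)^2$, is what makes the computation tractable: the non-invertibility contributes only the two real equations $q=0$, $m=0$ rather than a messy $4\times4$ determinant condition. A secondary subtlety is correctly identifying which solutions are "trivial" in the sense of Definition~\ref{def:trivial} — one must verify that the spurious solution families really are reparametrized simple motions (e.g. of the form $C=(t-h)(at+b-h)$) and hence legitimately excluded, and that what remains, after factoring out Euclidean equivalence, is the single motion $(t-(xe_1+ye_2)(e_-+e_+))(t-e_{12})$ claimed in the statement.
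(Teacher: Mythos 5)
Your proposal follows essentially the same route as the paper: normalize $h_2$ by Euclidean equivalence to $e_{12}$ (rotation) or $e_{3+}+e_{3-}$ (translation), parametrize $h_1$ as a general dual-quaternion element subject to the Study-variety conditions \eqref{eq:3}, impose non-invertibility of $h_1-\rv h_2$ via the determinant of its quadrance, and solve the resulting real polynomial system case by case, discarding trivial solutions. The only cosmetic difference is that the paper organizes the case split solely by the type of $h_2$ (so your separate ``mixed case'' is automatically subsumed), and in the rotation case the determinant collapses to a fourth power of a sum of squares, which makes reading off the real solutions immediate.
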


\begin{proof}
  To study Euclidean motions as special conformal motions, we reduce \ensuremath{\mathrm{CGA}} to the
  algebra \(\mathbb{DH}\) of dual quaternions, given as
  \begin{multline*}
    x \coloneqq
    q_s
    -q_i e_{23}
    +q_j e_{13}
    -q_k e_{12}\\
    +p_s(e_{123+}+e_{123-})
    +p_i (e_{1+}+e_{1-})
    +p_j(e_{2+}+e_{2-})
    +p_k(e_{3+}+e_{3-}).
  \end{multline*}

  For \((t-x) \in \mathbb{DH}[t]\) to be a motion polynomial we need to enforce that
  \(x\) lies on the Study variety and \(x+\tilde x\) is real. Assuming \(p_i\neq
  0\), this gives us the solution \(x=\frac{q_j p_j + q_k p_k}{p_i} e_{23} + q_j
  e_{13} - q_k e_{12} + p_i e_{1m} + p_i e_{1p}+ p_k e_{3m} + p_k e_{3p} + p_s
  e_{123m}\). In the next step we need to specify our right factor and solve for
  the irregular factorizability condition \eqref{eq:2}.

  There are only two cases, rotation and translation. We will first look at the
  case, where the right factor is a rotation. Without loss of generality we take
  \(h_2=e_{12}\). For infinite factorizability we need
  \[\det((x-\tilde h_2)(\tilde x- h_2)) =\biggl(q_s^2 + \Bigl(\frac{q_j p_j+q_k p_k}{p_i}\Bigr)^2 + q_j^2 + (1-q_k)^2\biggr)^4=0.\]
  (Recall that for calculation we use the determinant of the quadrance,
  cf.~Remark~\ref{rem:two-things}.) Solving this gives \(x=-e_{12} + p_i
  (e_{1-}+e_{1+}) + p_j (e_{2-}+e_{2+})\). We can see that this solution is of
  the desired form. Now we can repeat this procedure assuming \(p_i=0\). In this
  case we get \(x=-e_{12} + p_j (e_{2-}+e_{2+})\), assuming \(p_j\neq 0\).
  Assuming also \(p_j = 0\) we arrive at the last rotational case giving us no
  real solution.

  This shows that there exists only the circular translation assuming that the
  second factor is a rotation.

  We now need to take care of the case that the second factor is a translation.
  Using the same setup as before with the difference that \(h_2 =
  e_{3+}+e_{3-}\), we once again get three cases.
  \begin{itemize}
  \item Case 1: \(p_i \neq 0\). \(x=p_i (e_{1+}+e_{1-}) + p_j (e_{2+}+e_{2-}) +
    p_k (e_{3+}+e_{3-})\). This corresponds to a second
    translation, in total giving a new translation along a different direction.
    Hence a trivial motion.
  \item Case 2: \(p_i=0, p_j \neq 0\) \(x= p_j (e_{2+}+e_{2-}) + p_k
    (e_{3+}+e_{3-})\). This solution is subsumed by the first case.
  \item Case 3: \(p_i=0, p_j = 0\). \(x=0\). This corresponds to a trivial motion.
  \end{itemize}

  In conclusion, there exist only trivially irregularly factorizable motions with
  translations as factors and only the circular translation when there is a
  rotation as a factor.
\end{proof}

\section{Polynomials With Commuting Factors}
\label{sec:commuting}

After having investigated the special case of rigid body motions, we will now
turn to the case of \emph{commuting} factors. There is also only one
known example which has been previously described, in \cite{dorst19,icgg_paper}.
In this case we get the extra condition that \(h_1h_2 = h_2h_1\). After
splitting the set of potential right factors \((t-h_2)\) into the parts where
\(h_2\) is a rotation, translation or scaling, we look for solutions. Doing this
we find the following holds true.

\begin{theorem}[Villarceau Motion]
  Let \(C \in \ensuremath{\mathrm{CGA}_+}[t]\) be an irregularly factorizable motion polynomial of
  degree two with commuting factors. Then \(C\) is either a trivial motion or
  conformally equivalent to the Villarceau motion \(C\coloneqq
  (t-e_{12})(t-e_{3+})\).
\end{theorem}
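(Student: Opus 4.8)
The plan is to mimic the structure of the proof of Theorem~\ref{thm:CircTrans}: parametrize a generic right factor $t-h_2$ of each of the three conformal types (rotation, transversion, scaling), impose the motion-polynomial conditions \eqref{eq:3} on $h_1$, add the commutativity constraint $h_1h_2 = h_2h_1$ and the irregularity constraint \eqref{eq:2} in the form $\det((h_1-\rv h_2)\wrv{(h_1-\rv h_2)}) = 0$, and then solve the resulting algebraic system for $h_1$ in each case. Because the problem is invariant under conformal equivalence, I would first normalize $h_2$ to a convenient representative of its type --- e.g. $h_2 = e_{12}$ for a rotation, $h_2 = e_{3+}+e_{3-}$ for a transversion, $h_2 = e_{+-}$ (or $-e_{+-}$) for a scaling --- so that each case becomes a concrete finite computation. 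The commutativity condition is the crucial simplifier here: it forces $h_1$ to lie in the centralizer of $h_2$ in $\CGAp$, which is a low-dimensional subalgebra, so the subsequent system in the remaining coordinates of $h_1$ is small.

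First I would record, once and for all, the centralizer of each normalized $h_2$. For $h_2 = e_{12}$ this is spanned by $1, e_{12}, e_{+-}, e_{12+-}$ together with the plane $\langle e_3, e_{3+}, e_{3-}, e_{123+}, e_{123-}, e_{123+-}\rangle$-type elements that commute with $e_{12}$; in practice the motion-polynomial conditions \eqref{eq:3} (reality of $h_1\rv h_1$ and of $h_1 + \rv h_1$) cut this down sharply, since $h_1 + \rv h_1 \in \R$ kills all the grade-$0$-and-scalar-reversion-symmetric parts except a real constant. Then I would substitute the reduced ansatz for $h_1$ into $\det((h_1-\rv h_2)\wrv{(h_1-\rv h_2)})$, use Equation~\eqref{eq:1} to split it into the two real equations $q = 0$ and $m = 0$, and solve. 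I expect that in the rotation and scaling cases the only real solutions are the trivial ones --- $h_1$ a real reparametrization of $h_2$, or two "parallel" simple motions of the same type whose product is again (a reparametrization of) a simple motion --- matching the pattern of Theorem~\ref{thm:CircTrans}, and that the only genuinely new solution appears in the mixed rotation/transversion case and is $\CGAp$-conjugate to $h_1 = e_{3+}$, $h_2 = e_{12}$ (equivalently $C = (t-e_{12})(t-e_{3+})$), which one recognizes as the conformal Villarceau motion of \cite{dorst19,icgg_paper}.

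To finish I would verify that the Villarceau polynomial $(t-e_{12})(t-e_{3+})$ indeed satisfies all the hypotheses: that $e_{12}$ and $e_{3+}$ commute (immediate, disjoint indices with commuting generators), that both $t-e_{12}$ and $t-e_{3+}$ are motion polynomials (check \eqref{eq:3}: $e_{12}\rv{e_{12}} = 1$, $e_{12}+\rv{e_{12}} = 0$, and likewise $e_{3+}\rv{e_{3+}} = 1$, $e_{3+}+\rv{e_{3+}} = 0$), and that $e_{12} - \rv{e_{3+}} = e_{12} + e_{3+}$ is non-invertible by Proposition~\ref{prop:invertible} and Equation~\eqref{eq:1} --- i.e.\ its quadrance has vanishing $\det$. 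Conversely, every case-by-case solution that is not of this form must be shown to be trivial in the sense of Definition~\ref{def:trivial}, which amounts to exhibiting the reparametrization map $f$ and the element $h$; for the "same type twice" and "two parallel translations/transversions" sub-cases this is the same argument used in Theorem~\ref{thm:CircTrans} and I would simply cite it.

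The main obstacle I anticipate is the scaling cases (scaling with scaling, scaling with rotation, scaling with transversion): here $h_2$ has two real roots in its quadrance, its centralizer is larger and less rigid than in the rotation case, and the system $q = m = 0$ together with \eqref{eq:3} and commutativity may have spurious-looking real solution branches that must each be identified as trivial. Ruling these out cleanly --- rather than by brute Gröbner-basis computation --- and in particular showing that no new non-trivial commuting example hides among the scalings, is the delicate part; I would lean on \textsc{Resolve}-type real-quantifier-elimination checks (as in Remark~\ref{rem:rare}) to certify that the only non-trivial real component is the Villarceau one.
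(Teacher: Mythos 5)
Your overall skeleton matches the paper's proof: normalize \(h_2\) to \(e_{12}\), \(e_{3+}+e_{3-}\), or \(\pm e_{+-}\), restrict \(h_1\) to the centralizer of \(h_2\) cut down by the motion-polynomial conditions \eqref{eq:3}, impose \eqref{eq:2} via Equation~\eqref{eq:1}, and discard trivial branches. Two points, however, need attention. First, a misdiagnosis: the non-trivial solution does \emph{not} arise in a ``mixed rotation/transversion'' pairing. The factor \(t-e_{3+}\) has quadrance polynomial \(t^2+1\) with no real roots, so it is a conformal rotation; the Villarceau motion is a commuting product of two conformal rotations, and in the case analysis it appears under \(h_2=e_{12}\) with \(h_1\) ranging over the family \(x=\pm\sqrt{x_{10}^2+x_{11}^2+1}\,e_{3+}+x_{10}e_{3-}+x_{11}e_{+-}\) (obtained from \(x_1=0\), \(x_9^2-x_{10}^2-x_{11}^2=1\), which the reality argument forces). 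This is not fatal to your method since you parametrize \(h_1\) over the whole centralizer, but if you had additionally filtered \(h_1\) by type as suggested, you would have looked in the wrong place.

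Second, and more seriously, the substantive half of the proof is missing. The irregularity-plus-commutativity system does not produce a single solution but a two-parameter real family of non-trivial \(h_1\), and the theorem claims they are \emph{all} conformally equivalent to the one Villarceau representative. Your proposal compresses this to ``which one recognizes as \(\CGAp\)-conjugate to \(h_1=e_{3+}\)'' without argument. The paper has to work for this: it decomposes \(x\) as a blade \(\alpha(p\wedge b_2)\) (a plane wedge a sphere), then constructs an explicit translation \(a\) in the \(e_3\)-direction and a scaling \(s\) centered at the origin that normalize \(p\) to \(e_3\) and \(b_2\) to the unit sphere. Crucially, both \(a\) and \(s\) must satisfy \(ah_2\rv a=h_2\) and \(sh_2\rv s=h_2\), i.e.\ the normalizing conformal map must fix the \emph{second} factor while moving the first, so that the whole product is conjugated to \((t-e_{3+})(t-e_{12})\). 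Your proposal never addresses this constraint, and without it the reduction of the solution family to a single conformal equivalence class does not follow. The scaling and transversion cases, by contrast, are handled in the paper by exhibiting the irregularity conditions as sums of squares (forcing trivial solutions directly), which is cleaner than the quantifier-elimination fallback you propose, though the latter would also work.
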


\begin{proof}
  We proceed in the same manner as in the proof of Theorem~\ref{thm:CircTrans}.
  Let \(x = x_1 + x_2 e_{12} + x_3 e_{13} + \cdots + x_{16} e_{23+-}\). This
  time we have the extra condition that \(x h_2-h_2 x=0\). Let us first assume
  \(h_2\) to describe a rotation. Without loss of generality \(h_2 = e_{12}\).
  We now get two possible solutions for \(x\) such that \((t-x)\) is a motion
  polynomial that commutes with \((t-h_2)\).
  \[
    x = x_{1} + x_{9} e_{3+} + x_{10} e_{3-} + x_{11} e_{+-}, \quad
    x^\prime = x_{1} + x_{2} e_{12}.
  \]
  We see that \(x^\prime\) just corresponds to the same rotation in a possibly
  different parametrization. This gives rise to a trivial motion and can
  therefore be disregarded.

  In the other case the irregular factorizability condition for \(x\) boils down to 
  \[((x_{1}^2 + x_{9}^2 - x_{10}^2 - x_{11}^2 + 1)^2-4(x_{9}^2 - x_{10}^2 -
    x_{11}^2))^2 = 0.\] 
    The left side equals to a square of a sum of squares as $4 x_{1}^2 +(x_{1}^2+x_{9}^2-x_{10}^2-x_{11}^2-1)^2$ which shows that in order for the solution to be
  real, we need \(x_{1} = 0\) and \(x_{9}^2 - x_{10}^2 - x_{11}^2=1\), which yields \(x_{9}=
  \pm \sqrt{x_{10}^2 + x_{11}^2 + 1}\). Then \[x =\pm\sqrt{x_{10}^2 + x_{11}^2 +
      1}\; e_{3+} + x_{10} e_{3-} + x_{11} e_{+-}\] is a non-trivial real
  solution. We will be doing the calculations for
  \begin{equation*}
  x=\sqrt{x_{10}^2 + x_{11}^2
    + 1}\; e_{3+} + x_{10} e_{3-} + x_{11} e_{+-} 
    \end{equation*}
    as the other case can be
  calculated analogously.

  To understand what motion \(x\) describes, we decompose it as a blade. This
  results in \(x =\alpha (b_1 \wedge b_2)\) where
  \begin{align*}
    \alpha &= \frac{1}{(x_{11}^2 + 1)},\\
    b_1 &= -x_{11}\Big(\sqrt{x_{10}^2 + x_{11}^2 + 1}\Big) \; e_- + (x_{11}^2+1) e_3 - x_{10}x_{11} e_+,\\
    b_2 &= \sqrt{x_{10}^2 + x_{11}^2 + 1} \; e_+ + x_{10} e_-.
  \end{align*}
  By forming a linear combination of \(b_1\) and \(b_2\) we get \(x = \alpha (p
  \wedge b_2)\) with
  \begin{equation*}
    p = b_1 - x_{11}b_2
      = (x_{11}^2+1)e_3-\Big(\sqrt{x_{10}^2+x_{11}^2+1}+x_{10}\Big)x_{11}e_+.
  \end{equation*}
  We can check that \(b_2\) is a sphere centered at the origin with non-zero
  radius and \(p\) is a plane with normal vector \(n\) and distance to origin
  \(d\), where
  \begin{gather*}
    n = (0,0,1),\quad
    d = -\bigg(\frac{(\sqrt{x_{10}^2+x_{11}^2+1}+x_{10})x_{11}}{x_{11}^2+1}\bigg).
  \end{gather*}
We now define a translation
  \[a \coloneqq 1-\bigg(\frac{\big(\sqrt{x_{10}^2+x_{11}^2+1}+x_{10}\big)x_{11}}{2(x_{11}^2+1)}\bigg)(e_{3+}+e_{3-})\]
  in the \(n\) direction by \(-d\) and observe the following properties:
  \begin{gather*}
    a\tilde{a} = a\wedge\tilde{a}=1,\quad
    a h_2\tilde{a} = h_2,\quad
    a p \tilde{a} = \frac{e_3}{\alpha}.
  \end{gather*}
  With this we can see that
  \[ax\tilde{a} = \alpha a(p\wedge b_2)\tilde{a}=\alpha a(p\wedge b_2)\tilde{a}=\alpha (ap\tilde{a})\wedge (ab_2\tilde{a})= e_3\wedge (ab_2\tilde{a})\]
  and
  \[ a(t-x)(t-h_2)\tilde{a} = (t-e_3\wedge (ab_2\tilde{a}))(t-h_2).\] Next we can
  investigate \(ab_2\tilde{a}\). Calculation shows that this corresponds to a
  sphere with center on the line \(e_{12}\) and some radius. Let us define
  \[b_2^\prime \coloneqq \frac{x_{11}}{\sqrt{x_{10}^2+x_{11}^2+1}-x_{10}} e_3.\]
  We check that \((t-e_3\wedge (ab_2\tilde{a}))=(t-e_3\wedge b_2^\prime)).\) We
  then scale \(x\) appropriately with scaling \(s\) centered at the origin. Such
  a scaling preserves planes and lines through the origin and therefore only
  changes the radius of \(ab_2\tilde{a}\), when applied to \((t-e_3\wedge
  (ab_2\tilde{a}))(t-h_2)\). We choose the scaling factor such that \(ab_2\tilde{a}\)
  is transformed to the unit sphere. In total this now gives us
  \[sa(t-x)(t-h_2)\tilde{s}\tilde{a}=(t-e_3\wedge
    e_+)(t-e_{12})=(t-e_{3+})(t-e_{12}).\] As both factors commute, we see that
  every irregularly factorizable motion with commuting factors, one of which is
  a rotation, is immediately the Villarceau motion.

  Let us now investigate the case of a transversion being the right factor.
  Without loss of generality we choose \(h_2 \coloneqq e_{3+}+e_{3-}\).
Similar to the rotational case we can find two possible commuting factors
  \((t-x)\) and \((t-x^\prime)\) of a motion polynomial. The infinite
  factorizability conditions read
  \[x_1^8 = 0, \quad ({x^{\prime}_1}^2+{x^{\prime}_2}^2)^4 = 0\] for \(x=x_1 +
    x_5(e_{1+}+e_{1-})+ x_8(e_{2+}+e_{2-})+ x_{10}(e_{3+}+e_{3-})\) and
    \(x^\prime=x^{\prime}_1 + x^{\prime}_2 e_{12} + x^{\prime}_5(e_{1+} + e_{1-}) +
    x^{\prime}_8(e_{2+} + e_{2-})\), respectively. We can immediately see that all
    real solutions give rise to trivial motions as \(x\) and \(x'\) both are translations.

  In the case of a scaling as a right factor we get the following: Without loss
  of generality let \(h_2 \coloneqq -e_{+-}\). Then analogously to the previous
  cases the irregular factorizability conditions for \(x=x_1 + x_2 e_{12} + x_3
  e_{13} + x_6 e_{23}\) and \(x^\prime = x_1 + x_{11} e_{+-}\) read
  \[(x_1^2+x_2^2+x_3^2+x_6^2-1)^2+4(x_2^2+x_3^2+x_6^2) = 0,\quad
    ({x^{\prime}_1}^2-(x^{\prime}_{11}-1)^2)^4 = 0.\] In the first case we get
  \(x=1\). This generates a trivially factorizable motion. For the second case we get
  \(x^\prime = x^{\prime}_1 + (1 \pm x^{\prime}_1)e_{+-}\), which is just an offset
  of the original scaling. Hence, also a trivial factorization.
\end{proof}

\section{Conclusion}

In this text we have completely characterized quadratic motion polynomials with irregular factorizations. The restriction to polynomials of degree two allows for relatively simple computational approaches and includes already known quadratic motion polynomials. Extensions to higher degrees are of course of interest as are extensions to more general motion groups. We expect many similarities but also foresee some crucial differences, for example the algebraic description of the Study variety. As we have seen, irregularly factorizable polynomials can have infinitely many but also finitely many factorizations. We consider a characterization of the infinitely factorizable motion polynomials as an worthy topic of further research. For factorization of motion polynomials, we are interested in the decomposition of rational curves on those orthogonal groups  comparing both with the 2-blades decomposition \cite{dorst16} of elements of those orthogonal groups. Finally, as suggested by a reviewer, it 
would be interesting to draw parallels to the work of Roelfs and De Keninck \cite{Roelfs23} on bivector decomposition in future work.

\section*{Data Availability Statement}

No new data was created or analyzed in this study.

\section*{Acknowledgments}

Zijia Li is supported by the National Key R\&D Program of China (2023YFA10
09401) and partially supported by the Strategic Priority Research Program of the
Chinese Academy of Sciences 0640000 \& XDB0640200 and the Guangdong Basic and
Applied Basic Research Foundation under Grant 2024A151501 0506.

This research was funded in whole or in part by the Austrian Science Fund (FWF)  P~33397-N/Grant DOI: 10.55776/P33397 (Rotor Polynomials: Algebra and Geometry of Conformal Motions).

This research was funded in whole or in part by the Austrian Science Fund (FWF) 10.55776/I6233. For open access purposes, the author has applied a CC BY public copyright license to any author-accepted manuscript version arising from this submission.
 
\bibliographystyle{elsarticle-num}

\end{document}